\DeclareMathOperator*{\esssup}{\mathrm{ess\,sup}}
\makeatletter\@addtoreset{equation}{section}\makeatother
\newtheorem{theorem}{Theorem}[section]
\newtheorem{lemma}[theorem]{Lemma}
\newtheorem{proposition}[theorem]{Proposition}
\newtheorem{assumption}[theorem]{Assumption}
\newtheorem{definition}[theorem]{Definition}
\newtheorem{remark}[theorem]{Remark}
\numberwithin{equation}{section}
\title{Self-regulation in continuum population models}
\author{Yuri  Kondratiev}
\address{Fakut\"at f\"ur Mathematik, Universit\"at Bielefeld, Bielefeld D-33615, Germany and Interdisciplinary Center
for Complex Systems, Dragomanov University, Kyiv, Ukraine}
\email{kondrat@math.uni-bielefeld.de}
\author{ Yuri  Kozitsky}
\address{Instytut Matematyki, Uniwersytet Marii Curie-Sk{\l}odowskiej, 20-031 Lublin, Poland}
\email{jkozi@hektor.umcs.lublin.pl}
\keywords{Markov evolution, competition kernel, Poisson random
field}
\begin{document}

\subjclass{60J80; 92D25; 82C22}%

\begin{abstract}

We study the Markov dynamics of an infinite birth-and-death system
of point entities placed in $\mathds{R}^d$, in which the
constituents disperse and die, also due to competition. Assuming
that the dispersal and competition kernels are just continuous and
integrable we prove that the evolution of states of this model
preserves their sub-Poissonicity, and hence the local
self-regulation (suppression of clustering) takes place. Upper
bounds for the correlation functions of all orders are also obtained
for both long and short dispersals, and for all values of the
intrinsic mortality rate $m\geq 0$.

\end{abstract}

\maketitle

\section{Introduction}

\subsection{The setup}

The aim of the present work is to contribute to the development of
the mathematical theory of large systems of living entities, which
is a challenging task of modern applied  mathematics
\cite{BB,B,Neuhauser}. Within this task there is the description of
the dynamics of individual-based models in which communities of
entities appear as configurations of points in some continuous
habitat, see \cite{BP1,Mu,Ova,O}. In particular, these can be
birth-and-death models, in which the dynamics amounts to the
appearance (birth) and disappearance (death) of the constituents.
The fact that the disappearance of a given entity is related to its
interaction with the existing community is interpreted as
\emph{competition} between the entities.

In the simplest birth-and-death models, the system is finite and the
state space is $\mathds{N}_0 :=\mathds{N}\cup \{0\}$. That is, in
state $n\in \mathds{N}_0$ the system consists of $n$ entities, which
is the complete characterization of the state. Then the only
observed result of the trade-off between the appearance and
disappearance is the dynamics of the number of entities in the whole
population. The theory of such models goes back to works by A.
Kolmogorov and W. Feller, see \cite[Chapter XVII]{Feller} and, e.g.,
\cite{Banas,Ri}, for a more recent account of the related concepts
and results. Therein the time evolution of the probability of having
$n$ entities is obtained by solving the Kolmogorov equation  with a
tridiagonal infinite matrix on the right-hand side. Its entries are
expressed in terms of the birth and death rates $\lambda_n$ and
$\mu_n$, respectively. If the increase of $\lambda_n$ and $\mu_n$ is
controlled by affine functions of $n$, the solution of the
Kolmogorov equation in given by a stochastic semigroup, see, e.g.,
\cite{Banas} and the literature quoted in this work. However, if
$\lambda_n$ and $\mu_n$$\lambda_n$ increase faster than $n$, this is
no more the case. For infinite systems, the very definition of the
Kolmogorov equation gets problematic as the mentioned birth and
death rates get infinite. Usually such systems are considered in
some spatial habitat and the parameters that describe the
interactions between the entities are space-dependent. Then then
along with the global characteristics the system acquires a local
time-dependent structure. The mentioned trade-off may affect this
structure with or without affecting the global dynamics of the
population. This relates also to the models with traits other than
the spatial position.

In this work we continue dealing with the model introduced in
\cite{BP1,BP3,Mu}. Here the spatial habitat is the Euclidean space
$\mathds{R}^d$, $d\geq 1$, equipped with the usual outfit of
mathematical structures. Then the phase space is the set $\Gamma$ of
all subsets $\gamma \subset \mathds{R}^d$ such that the set
$\gamma_\Lambda:=\gamma\cap\Lambda$ is finite whenever $\Lambda
\subset \mathds{R}^d$ is compact. For each such $\Lambda$, one
defines the counting map $\Gamma \ni \gamma \mapsto |\gamma_\Lambda|
:= \#\{\gamma\cap\Lambda\}$, where the latter denotes cardinality.
Thereby, one introduces the subsets $\Gamma^{\Lambda,n}:=\{ \gamma
\in \Gamma : |\gamma_\Lambda| = n\}$, $n \in \mathds{N}_0$, and
equips $\Gamma$ with the $\sigma$-field generated by all such
$\Gamma^{\Lambda,n}$. This allows for considering probability
measures on $\Gamma$ as states of the system. Among them there are
Poissonian states in which the entities are independently
distributed over $\mathds{R}^d$,  see \cite[Chapter 2]{Kingman}.
They may serve as reference states for studying correlations between
the positions of the entities. For the homogeneous Poisson measure
$\pi_\varkappa$ with density $\varkappa
>0$ and every compact $\Lambda$, one has
\begin{equation}
  \label{J1}
\pi_\varkappa (\Gamma^{\Lambda,n}) = \left(\varkappa {\rm
V}(\Lambda)\right)^n \exp\left( - \varkappa {\rm V}(\Lambda)
\right)/ n!, \qquad n \in \mathds{N}_0,
\end{equation}
where ${\rm V}(\Lambda)$ denotes Lebesgue's measure (volume) of
$\Lambda$. Let us agree to call a state, $\mu$,
\emph{sub-Poissonian} (cf. Definition \ref{J1df} and Remark
\ref{I1rk} below) if, for each compact $\Lambda \subset
\mathds{R}^d$, the following holds
\begin{equation}
  \label{J2}
\forall n\in \mathds{N}_0 \qquad \mu(\Gamma^{\Lambda,n}) \leq
C_\Lambda \varkappa_\Lambda^n/n!,
\end{equation}
with some positive constants $C_\Lambda$ and $\varkappa_\Lambda$. By
the virtue of this definition, the sub-Poissonian states are
characterized by the lack of \emph{heavy tails} or
\emph{clustering}. That is, the entities in such a state are either
independent in taking their positions or `prefer' to stay away of
each other.

The counting map $\Gamma \ni \gamma \mapsto |\gamma|$ can also
be defined for $\Lambda = \mathds{R}^d$. Then the set of \emph{finite}
configurations
\begin{equation}
 \label{J3a}
 \Gamma_0:= \bigcup_{n\in \mathds{N}_0}\{
\gamma \in \Gamma: |\gamma|=n\}
\end{equation}
is clearly measurable. In a state with the property
$\mu(\Gamma_0)=1$, the system  is ($\mu$-almost surely)
\emph{finite}. By (\ref{J1}) $\pi_\varkappa(\Gamma_0) =0$, hence the
system in state $\pi_\varkappa$ is infinite in the same sense. A
nonhomogeneous Poisson measure $\pi_\varrho$, characterized by
density $\varrho:\mathds{R}^d\to [0,+\infty)$, satisfies (\ref{J1})
with $\varkappa {\rm V}(\Lambda)$ replaced by $\int_\Lambda
\varrho(x) dx$. Then either $\pi_\varrho(\Gamma_0) =1$ or
$\pi_\varrho(\Gamma_0) =0$, depending on whether or not $\varrho$ is
globally integrable. The use of infinite configurations for modeling
large finite populations is as a rule justified, see, e.g.,
\cite{Cox}, by the argument that in such a way one gets rid of the
boundary and size effects. Note that a finite system with dispersal
-- like the one specified in (\ref{L}) and (\ref{J4}) below -- being
placed in a noncompact habitat always disperse to fill its empty
parts, and thus is \emph{developing}. Infinite configurations are supposed
to model \emph{developed} populations. In this work, we shall
consider infinite systems and hence deal with states $\mu$ such that
$\mu(\Gamma_0)=0$.

\subsection{The article overview}

To characterize  states on $\Gamma$ one employs {\it observables} --
appropriate functions $F:\Gamma \rightarrow \mathds{R}$. Their
evolution is obtained from the Kolmogorov equation
\begin{equation}
 \label{R2}
\frac{d}{dt} F_t = L F_t , \qquad F_t|_{t=0} = F_0, \qquad t>0,
\end{equation}
where the generator $L$ specifies the model. The states' evolution
is then obtained from the Fokker--Planck equation
\begin{equation}
 \label{R1}
\frac{d}{dt} \mu_t = L^* \mu_t, \qquad \mu_t|_{t=0} = \mu_0,
\end{equation}
related to that in (\ref{R2}) by the duality $\mu_t(F_0) =
\mu_0(F_t)$, where
\[
\mu(F) := \int_{\Gamma} F(\gamma) \mu(d \gamma).
\]
The model discussed in this work is specified by the following
\begin{eqnarray}
 \label{L}
 \left(L F \right)(\gamma) & = & \sum_{x\in \gamma} E^{-} (x, \gamma \setminus x) \left[F(\gamma \setminus x) - F(\gamma) \right]\\[.2cm]
 & + & \int_{\mathds{R}^d} E^{+} (x, \gamma ) \left[F(\gamma \cup x) - F(\gamma) \right]dx, \nonumber
\end{eqnarray}
where $E^{+} (x, \gamma )$ and $E^{-} (x, \gamma )$ are
state-dependent birth  and death rates, respectively. They have the
following forms
\begin{equation}
 \label{J4}
E^{+} (x, \gamma ) = \sum_{y\in \gamma} a^{+} (x-y),
\end{equation}
\begin{equation}
 \label{J6}
 E^{-} (x, \gamma ) =  m + \sum_{y\in \gamma} a^{-} (x-y),
\end{equation}
where $a^{+}\geq 0$ and $a^{-}\geq 0$ are the \emph{dispersal} and
\emph{competition kernels}, respectively, $m\geq 0$  is the
intrinsic mortality rate.  This model was introduced in
\cite{BP1,BP3,Mu}. Its recent study can be found in
\cite{FKKK,KK,K}, see also older works \cite{Dima,DimaN2}. For the
kernels $a^{\pm}$, one has the following possibilities:
\begin{itemize}
  \item[(i)] (\emph{short dispersal}) there exists $\theta>0$ such that $a^{-} (x) \geq \theta
  a^{+}(x)$ for all $x\in
  \mathds {R}^d$;
\item[(ii)] (\emph{long dispersal}) for each $\theta>0$, there exists  $x\in
  \mathds {R}^d$ such that $a^{-} (x) < \theta
  a^{+}(x)$.
\end{itemize}
In case (i), $a^{+}$ decays faster than $a^{-}$, and hence each
daughter entity can `kill' her mother as well as can be `killed' by
her. Such models are usually employed to described the dynamics of
cell communities, see \cite{DimaRR}, where the dispersal is just the
cell division. An instance of the short dispersal is given by
$a^{+}$ with finite range, i.e., $a^{+}(x)\equiv 0$ for all $|x|
\geq r$, and $a^{-}(x)>0$ for such $x$. In case (ii), $a^{-}$ decays
faster than $a^{+}$, and hence some of the offsprings can be out of
reach of their parents. Models of this kind can be adequate, e.g.,
in plant ecology with the long-range dispersal of seeds, cf.
\cite{Ova}.

In this article, the model parameters are supposed to satisfy the
following.
\begin{assumption}
  \label{Ass1}
The kernels $a^{\pm}$ in (\ref{J4}) and (\ref{J6}) are continuous
and belong to $L^1 (\mathds{R}^d) \cap L^\infty (\mathds{R}^d)$.
\end{assumption}
According to this we set
\begin{gather}
  \label{J6a}
\langle a^{\pm}\rangle = \int_{\mathds{R}^d} a^{\pm } (x ) dx,
\qquad \|a^{\pm}\| = \sup_{x\in \mathds{R}^d} a^{\pm}(x) .
\end{gather}
For the model with $E^{+}$ as in (\ref{J4}), in \cite{KK} we have
constructed the evolution of states $\mu_0 \mapsto \mu_t$, $t>0$,
which preserves the sub-Poissonicity under a certain condition on
the dispersal and competition kernels. In this work, by means of the
result proved in Lemma \ref{J1lm} we eliminate this restriction and
prove that the local self-regulation in this model occurs (Theorem
\ref{1tm}) if $a^{\pm}$ satisfy just a minimum set of assumptions,
see Assumption \ref{Ass1} and also the corresponding comments in subsection 2.3. For
the migration model specified in (\ref{J5}) and (\ref{J6}), we prove
that the evolution of states $\mu_0 \mapsto \mu_t$, $t>0$, exists
(Theorem \ref{J2lm}) and is such that $\mu_t (N_\Lambda^n) \leq C^{(n)}_\Lambda$ for each $t>0$.
We do this as follows. First, assuming the existence of the evolution
$\mu_0 \mapsto \mu_t$, we prove that it is characterized by the
global self-regulation as just mentioned, see Theorem \ref{2tm} and Lemma \ref{J3lm}.
Then we prove the existence of this evolution.

The structure of the article is as follows. In Section 2, we
introduce the necessary technicalities and then formulate the
results: Theorems \ref{1tm} and \ref{2tm}. Thereafter, we make a
number of comments to these results and compare them with the facts
known for similar objects. In Section 3, we present the proof of the
both mentioned statements assuming the existence of the evolution of
states in the migration model. In Section 4, we prove the latter
fact.

\section{Preliminaries and the Results}

By $\mathcal{B}(\mathds{R})$ we denote the sets of all Borel subsets
of $\mathds{R}$. The configuration space $\Gamma$ is equipped with
the vague topology, see \cite{Albev,Tobi}, and thus with the
corresponding Borel $\sigma$-field $\mathcal{B}(\Gamma)$, which
makes it a standard Borel space. Note that $\mathcal{B}(\Gamma)$ is
exactly the $\sigma$-field generated by the sets
$\Gamma^{\Lambda,n}$, mentioned in Introduction.  By
$\mathcal{P}(\Gamma)$ we denote the set of all  probability measures
on $(\Gamma, \mathcal{B}(\Gamma))$.

\subsection{Correlation functions}

Like in \cite{Dima,FKKK,KK}, the evolution of states will be
described by means of correlation functions. To explain the essence
of this approach let us consider the set of all compactly supported
continuous functions $\theta:\mathbb{R}^d\to (-1,0]$. For a state,
$\mu$, its {\it Bogoliubov} functional, cf. \cite{TobiJoao}, is
\begin{equation}
  \label{I1}
B_\mu (\theta) = \int_{\Gamma} \prod_{x\in \gamma} ( 1 + \theta (x))
\mu( d \gamma),
\end{equation}
with $\theta$ running through the mentioned set of functions. For
the homogeneous Poisson measure $\pi_\varkappa$,  it takes the form
\begin{equation*}
B_{\pi_\varkappa} (\theta) = \exp\left(\varkappa
\int_{\mathbb{R}^d}\theta (x) d x \right).
\end{equation*}
\begin{definition}
  \label{J1df}
The set of states $\mathcal{P}_{\rm exp}(\Gamma)$ is defined as that
containing all those states $\mu\in \mathcal{P}(\Gamma)$ for which
$B_\mu$ can be continued, as a function of $\theta$, to an
exponential type entire function on $L^1 (\mathbb{R}^d)$.
\end{definition}
It can be shown that a given $\mu$ belongs to $\mathcal{P}_{\rm
exp}(\Gamma)$ if and only if its functional $B_\mu$ can be written
down in the form
\begin{eqnarray}
  \label{I3}
B_\mu(\theta) = 1+ \sum_{n=1}^\infty
\frac{1}{n!}\int_{(\mathbb{R}^d)^n} k_\mu^{(n)} (x_1 , \dots , x_n)
\theta (x_1) \cdots \theta (x_n) d x_1 \cdots d x_n,
\end{eqnarray}
where $k_\mu^{(n)}$ is the $n$-th order correlation function of
$\mu$. It is a symmetric element of $L^\infty ((\mathbb{R}^d)^n)$
for which
\begin{equation}
\label{I4}
  \|k^{(n)}_\mu \|_{L^\infty
((\mathbb{R}^d)^n)} \leq C \exp( \vartheta n), \qquad n\in
\mathbb{N}_0,
\end{equation}
with some $C>0$ and $\vartheta \in \mathbb{R}$. Note that
$k_{\pi_\varkappa}^{(n)} (x_1 , \dots , x_n)= \varkappa^n$. Note
also that (\ref{I3}) resembles the Taylor
expansion of the characteristic function of a probability measure.
In view of this, $k^{(n)}_\mu$ are also called (factorial)
\emph{moment functions}, cf. \cite{BP1,BP3,Mu}.
\begin{remark}
  \label{I1rk}
By (\ref{I4}) each $\mu\in \mathcal{P}_{\rm exp}(\Gamma)$ satisfies
(\ref{J2}) and hence is a sub-Poissonian state.
\end{remark}
Recall that $\Gamma_0$ -- the set of all finite $\gamma \in \Gamma$ defined in
(\ref{J3a}) -- is an element of $\mathcal{B}(\Gamma)$. A function $G:\Gamma_0
\to \mathds{R}$ is $\mathcal{B}(\Gamma)/\mathcal{B}(\mathds{R}
)$-measurable, see \cite{FKKK}, if and only if, for each $n\in
\mathds{N}$, there exists a symmetric Borel function $G^{(n)}:
(\mathds{R}^{d})^{n} \to \mathds{R}$ such that
\begin{equation}
 \label{7}
 G(\eta) = G^{(n)} ( x_1, \dots , x_{n}), \quad {\rm for} \ \eta = \{ x_1, \dots , x_{n}\}.
\end{equation}
\begin{definition}
  \label{Gdef}
A measurable function $G:\Gamma_0 \to \mathds{R}$  is said to have bounded
support if: (a) there exists $\Lambda \in \mathcal{B}_{\rm b}
(\mathds{R}^d)$ such that $G(\eta) = 0$ whenever $\eta\cap
(\mathds{R}^d \setminus \Lambda)\neq \emptyset$; (b) there exists
$N\in \mathds{N}_0$ such that $G(\eta)=0$ whenever $|\eta|
>N$.  By $\Lambda(G)$ and $N(G)$ we denote the
smallest $\Lambda$ and $N$ with the properties just mentioned. By
$B_{\rm bs}(\Gamma_0)$ we denote the set of all such functions.
\end{definition}
The Lebesgue-Poisson measure $\lambda$ on $(\Gamma_0,
\mathcal{B}(\Gamma_0))$ is defined by the following formula
\begin{eqnarray}
\label{8} \int_{\Gamma_0} G(\eta ) \lambda ( d \eta)  = G(\emptyset)
+ \sum_{n=1}^\infty \frac{1}{n! } \int_{(\mathds{R}^d)^{n}} G^{(n)}
( x_1, \dots , x_{n} ) d x_1 \cdots dx_{n},
\end{eqnarray}
holding for all $G\in B_{\rm bs}(\Gamma_0)$. Like in (\ref{7}), we
introduce $k_\mu : \Gamma_0 \to \mathds{R}$ such that $k_\mu(\eta) =
k^{(n)}_\mu (x_1, \dots , x_n)$ for $\eta = \{x_1, \dots , x_n\}$,
$n\in \mathds{N}$. We also set $k_\mu(\emptyset)=1$. With the help
of the measure introduced in (\ref{8}), the expressions for $B_\mu$ in
(\ref{I1}) and (\ref{I3}) can be combined into the following formulas
\begin{eqnarray}
  \label{1fa}
 B_\mu (\theta)& = & \int_{\Gamma_0} k_\mu(\eta) \prod_{x\in \eta} \theta (x) \lambda (d\eta)=: \int_{\Gamma_0} k_\mu(\eta) e( \eta; \theta) \lambda (d \eta)
 \\[.2cm]
 & = &  \int_{\Gamma} \prod_{x\in \gamma} (1+ \theta (x)) \mu (d \gamma) =: \int_{\Gamma} F_\theta (\gamma) \mu(d
 \gamma). \nonumber
\end{eqnarray}
Thereby, one can transform  the action of $L$ on $F$, see
(\ref{L}), to the action of $L^\Delta$ on $k_\mu$ according to the
rule
\begin{equation}
  \label{1g}
\int_{\Gamma}(L F_\theta) (\gamma) \mu(d \gamma) = \int_{\Gamma_0}
(L^\Delta k_\mu) (\eta) e(\eta;\theta)
 \lambda (d \eta).
\end{equation}
This will allow us to pass from (\ref{R1}) to the corresponding
Cauchy problem for the correlation functions
\begin{equation}
  \label{J7}
\frac{d}{dt} k_t = L^\Delta k_t, \qquad k_t|_{t=0} = k_{\mu_0}.
\end{equation}
For the Bolker-Pacala model specified in (\ref{L}) and (\ref{J4}),
(\ref{J6}), by (\ref{1g}) the action of $L^\Delta$ looks as follows,
cf. \cite{FKKK,KK},
\begin{eqnarray}
  \label{J8}
\left( L^\Delta k \right) (\eta)& = & (L^{\Delta,-}k)(\eta) + \sum_{x\in\eta} E^{+} (x , \eta\setminus x) k(\eta \setminus x)\\[.2cm] \nonumber
& + &
\int_{\mathds{R}^d} \sum_{x\in \eta} a^{+} (x-y) k(\eta \setminus x
\cup y) d y, \nonumber
\end{eqnarray}
where
\begin{equation}
 \label{J8a}
(L^{\Delta,-}k)(\eta)  :=
- E^{-}(\eta) k(\eta) -
\int_{\mathds{R}^d}\left(\sum_{y\in \eta} a^{-}(x-y) \right) k(\eta\cup x)d x,
 \end{equation}
and
\begin{equation}
  \label{J9}
  E^{-}(\eta) := \sum_{x\in \eta} E^{-}(x ,\eta \setminus x).
\end{equation}
For the migration model specified in (\ref{L}) and (\ref{J5}),
(\ref{J6}), by (\ref{1g}) we obtain
\begin{equation}
  \label{J10}
\left( L^\Delta k \right) (\eta) = (L^{\Delta,-}k)(\eta) +
\sum_{x\in \eta} b(x) k(\eta \setminus x),
\end{equation}
with the same $L^{\Delta,-}$ as in (\ref{J8a}).  In the next
subsection, we introduce the spaces where we are going to define the
problems (\ref{J7}).

\subsection{The statements}
By (\ref{I3}) and (\ref{1fa}), it follows that $\mu \in
\mathcal{P}_{\rm exp}(\Gamma)$ implies
\begin{equation*}
 |k_\mu (\eta)| \leq C \exp( \vartheta  |\eta|),
\end{equation*}
holding for $\lambda$-almost all $\eta\in \Gamma_0$, some $C>0$, and
$\vartheta\in \mathds{R}$. In view of this, we set
\begin{equation}
 \label{18}
\mathcal{K}_\vartheta := \{ k:\Gamma_0\to \mathds{R}:
\|k\|_\vartheta <\infty\},
\end{equation}
where
\begin{equation}
  \label{17a}
 \|k\|_\vartheta = \esssup_{\eta \in \Gamma_0}\left\{ |k_\mu (\eta)| \exp\big{(} - \vartheta
  |\eta| \big{)} \right\}.
\end{equation}
Clearly, (\ref{18}) and (\ref{17a}) define a Banach space. In the following, we use the ascending scale of such
spaces $\mathcal{K}_\vartheta$, $\vartheta \in \mathds{R}$, with the
property
\begin{equation}
  \label{19}
\mathcal{K}_\vartheta \hookrightarrow \mathcal{K}_{\vartheta'},
\qquad \vartheta < \vartheta',
\end{equation}
where $\hookrightarrow$  denotes continuous embedding.

For $G\in B_{\rm
bs}(\Gamma)$, we set
\begin{equation}
  \label{9a}
(KG)(\gamma) = \sum_{\eta \Subset \gamma} G(\eta),
\end{equation}
where $\Subset$ denotes the summation is taken over all finite subsets. It satisfies,
see Definition \ref{Gdef},
\begin{equation*}
|(KG)(\gamma)| \leq \left( 1 + |\gamma\cap\Lambda (G)|\right)^{N(G)}.
\end{equation*}
The latter means that $\mu(KG) < \infty$ for each $\mu\in
\mathcal{P}_{\rm exp}(\Gamma)$. By (\ref{1fa}) this yields
\begin{equation}
 \label{J7a}
\langle \! \langle G, k_\mu \rangle \! \rangle := \int_{\Gamma_0}
G(\eta) k_\mu(\eta) \lambda (d \eta) =\mu(KG)
  < \infty.
\end{equation}
Set
\begin{equation}
  \label{9g}
B^\star_{\rm bs} (\Gamma_0) =\{ G\in B_{\rm bs}(\Gamma_0):
(KG)(\gamma) \geq 0 \ {\rm for} \ {\rm all} \ \gamma\in \Gamma\}.
\end{equation}
By \cite[Theorems 6.1 and 6.2 and Remark 6.3]{Tobi} one can prove
the next statement.
\begin{proposition}
  \label{Gpn}
Let  a measurable function $k : \Gamma_0 \to \mathds{R}$  have the
following properties:
\begin{eqnarray}
  \label{9h}
& (a) & \ \langle \! \langle G, k \rangle \!\rangle \geq 0, \qquad
{\rm for} \ {\rm all} \ G\in B^\star_{\rm bs} (\Gamma_0);\\[.2cm]
& (b) & \ k(\emptyset) = 1; \qquad (c) \ \ k(\eta) \leq
 C^{|\eta|} ,
\nonumber
\end{eqnarray}
with (c) holding for some $C >0$ and $\lambda$-almost all $\eta\in
\Gamma_0$. Then there exists a unique state $\mu \in
\mathcal{P}_{\rm exp}(\Gamma)$ for which $k$ is the correlation
function.
\end{proposition}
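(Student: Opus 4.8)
The plan is to recover $\mu$ from the defining duality (\ref{J7a}), reading it as a prescription for the expectations of the observables $KG$. Concretely, I would introduce a linear functional $\ell$ on the linear span of $\{KG : G \in B_{\rm bs}(\Gamma_0)\}$ by setting $\ell(KG) := \langle \! \langle G, k \rangle \! \rangle$. This is well-posed because the $K$-transform (\ref{9a}) is injective on $B_{\rm bs}(\Gamma_0)$ (by Möbius inversion over finite subsets), so $KG$ determines $G$ and hence the right-hand side. Taking $G = \mathds{1}_{\{\emptyset\}}$, for which $(KG)(\gamma) \equiv 1$ and $\langle \! \langle G, k \rangle \! \rangle = k(\emptyset)$, condition (b) gives $\ell(1) = 1$; and condition (a) says precisely that $\ell(KG) \geq 0$ whenever the observable $KG$ is nonnegative, i.e.\ $\ell$ is a normalized positive functional.

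Next I would invoke the realizability results of \cite{Tobi}. Theorem 6.1 there, together with Remark 6.3, provides, under the positivity (a), a measure $\mu$ on $(\Gamma, \mathcal{B}(\Gamma))$ whose correlation function is the given $k$, so that (\ref{J7a}) holds and $\ell$ is indeed integration against $\mu$. The normalization just obtained, $\mu(\Gamma) = \ell(1) = k(\emptyset) = 1$, shows that $\mu$ is a probability measure. Uniqueness is then the content of Theorem 6.2: the growth bound (c), $k(\eta) \leq C^{|\eta|}$, plays the role of a Carleman-type determinacy condition that pins down $\mu$ among all measures sharing these moments.

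Membership in $\mathcal{P}_{\rm exp}(\Gamma)$ follows directly from (c). Since correlation functions are nonnegative, writing (c) in terms of the $n$-point functions gives $\|k^{(n)}\|_{L^\infty((\mathds{R}^d)^n)} \leq C^n = \exp(n \log C)$, which is exactly the exponential-type estimate (\ref{I4}) with $\vartheta = \log C$ and prefactor $1$. By the characterization (\ref{I3})--(\ref{I4}) this means that $B_\mu$ continues to an entire function of exponential type on $L^1(\mathds{R}^d)$, so $\mu \in \mathcal{P}_{\rm exp}(\Gamma)$ in the sense of Definition \ref{J1df}.

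The genuinely hard step is the existence half of the second paragraph: that positivity tested only against the local cone $B^\star_{\rm bs}(\Gamma_0)$ of bounded-support functions suffices to produce a bona fide measure on the full configuration space $\Gamma$. This is the continuum analogue of the classical moment problem, and the delicate point---consistency of the local data under the projective limit over compacts, together with control of the tails---is exactly what is supplied by the machinery of \cite{Tobi}; the bound (c) is what guarantees both that the reconstruction is determinate and that the resulting measure lies in $\mathcal{P}_{\rm exp}(\Gamma)$. My task is therefore to verify the hypotheses of those theorems rather than to reprove them.
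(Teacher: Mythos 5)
Your proposal is correct and follows essentially the same route as the paper, which proves Proposition \ref{Gpn} precisely by invoking Theorems 6.1 and 6.2 and Remark 6.3 of \cite{Tobi}: positivity (a) on the cone $B^\star_{\rm bs}(\Gamma_0)$ yields existence of the measure, normalization $k(\emptyset)=1$ makes it a probability measure, and the bound (c) gives both uniqueness and the estimate (\ref{I4}) placing $\mu$ in $\mathcal{P}_{\rm exp}(\Gamma)$. Your additional verifications (injectivity of the $K$-transform, the test function $\mathds{1}_{\{\emptyset\}}$, nonnegativity of $k$ needed to convert (c) into the $L^\infty$ bound) are exactly the hypotheses-checking the paper leaves implicit.
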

Set, cf. (\ref{9g}),
\begin{equation}
  \label{19a}
\mathcal{K}^\star_\vartheta =\{k\in \mathcal{K}_\vartheta: \langle
\! \langle G,k \rangle \! \rangle \geq 0 \ {\rm for} \ {\rm all} \
G\in B^\star_{\rm bs} (\Gamma_0)\},
\end{equation}
which is a subset of the cone
\begin{equation}
  \label{19b}
\mathcal{K}^+_\vartheta =\{k\in \mathcal{K}_\vartheta: k(\eta) \geq
0 \ \ {\rm for} \  \lambda-{\rm almost} \ {\rm all} \ \eta \in
\Gamma_0\}.
\end{equation}
By Proposition \ref{Gpn} it follows that each $k\in
\mathcal{K}^\star_\vartheta$ such that $k(\emptyset) = 1$ is the
correlation function of a unique state $\mu\in \mathcal{P}_{\rm
exp}(\Gamma)$. Then we define
\begin{equation*}
\mathcal{K} = \bigcup_{\vartheta \in \mathds{R}}
\mathcal{K}_\vartheta, \qquad \mathcal{K}^\star = \bigcup_{\vartheta
\in \mathds{R}} \mathcal{K}_\vartheta^\star.
\end{equation*}
As a sum of Banach spaces, the linear space $\mathcal{K}$ is
equipped with the corresponding inductive topology that turns it
into a locally convex space.

For each $\vartheta \in \mathds{R}$ and $\vartheta'>\vartheta$, the
expressions in (\ref{J8}), (\ref{J8a}) and (\ref{J10}) can be used to
define the corresponding bounded linear operators
$L^\Delta_{\vartheta' \vartheta}$ acting from
$\mathcal{K}_\vartheta$ to $\mathcal{K}_{\vartheta'}$. Their
operator norms can be estimated similarly as in \cite[eqs. (3.11),
(3.13)]{KK}, which yields, cf. (\ref{J6a}),
\begin{eqnarray}
  \label{J11}
 &(a) &   {\rm Bolker-Pacala} \ {\rm model\!:} \nonumber \\[.1cm]
& & \qquad \qquad \qquad  \|L^\Delta_{\vartheta' \vartheta}\| \leq
\frac{4 (\| a^{+} \| +\| a^{-} \|  )}{e^2 (\vartheta' -
\vartheta)^2} + \frac{ \langle a^{+} \rangle  + \langle a^{-}
\rangle
 e^{\vartheta'}}{e (\vartheta' - \vartheta)}, \qquad
\qquad \qquad  \\[.3cm]
 &(b) &  {\rm migration} \ {\rm model\!:} \nonumber \\[.1cm]
\label{J12} & & \qquad  \qquad \qquad  \|L^\Delta_{\vartheta'
\vartheta}\| \leq  \frac{4 \| a^{-} \| }{e^2 (\vartheta' -
\vartheta)^2} + \frac{ \|b\| e^{-\vartheta} + \langle a^{-} \rangle
e^{\vartheta'}}{e (\vartheta' - \vartheta)}. \qquad \qquad \qquad
\end{eqnarray}
By means of the collection $\{L^\Delta_{\vartheta'
\vartheta}\}$ we introduce the corresponding continuous linear
operators acting on $\mathcal{K}$, and thus define the
corresponding Cauchy problems (\ref{J7}) in this space. By their
(global in time) solutions we will mean continuously differentiable
functions $[0,+\infty) \ni t \mapsto k_t \in \mathcal{K}$ such that
both equalities in (\ref{J7}) hold. Our results are given in the
following statements, both based on Assumption \ref{Ass1}.
\begin{theorem}[Bolker-Pacala model]
  \label{1tm}
For each $\mu_0 \in \mathcal{P}_{\rm exp} (\Gamma)$, the problem in
(\ref{J7}) with $L^\Delta:\mathcal{K}\to \mathcal{K}$ as in
(\ref{J8}) -- (\ref{J9}) and (\ref{J11}) has a unique solution which
lies in  $\mathcal{K}^\star$ and is such that $k_t(\emptyset)=1$ for
all $t>0$. Therefore, for each $t>0$, there exists a unique state
$\mu_t\in \mathcal{P}_{\rm exp}(\Gamma)$ such that $k_t =
k_{\mu_t}$.
\end{theorem}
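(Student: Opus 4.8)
The plan is to build the solution of (\ref{J7}) on the ascending scale $\{\mathcal{K}_\vartheta\}$ and then to recognize it, through Proposition \ref{Gpn}, as the correlation function of a state $\mu_t\in\mathcal{P}_{\rm exp}(\Gamma)$. Since $\mu_0\in\mathcal{P}_{\rm exp}(\Gamma)$, the bound (\ref{I4}) gives $k_{\mu_0}\in\mathcal{K}_{\vartheta_0}$ for some $\vartheta_0\in\mathds{R}$, with $k_{\mu_0}\in\mathcal{K}^\star_{\vartheta_0}$ and $k_{\mu_0}(\emptyset)=1$. The normalization propagates for free: the right-hand side of (\ref{J8})--(\ref{J9}) vanishes at $\eta=\emptyset$ (all sums over $x\in\eta$ are empty and $E^-(\emptyset)=0$), so $k_t(\emptyset)$ is constant and equal to $1$. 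Thus conditions (b) and (c) of (\ref{9h}) will be automatic once the solution is shown to stay in a fixed $\mathcal{K}_{\vartheta_1}$, since then $|k_t(\eta)|\le\|k_t\|_{\vartheta_1}e^{\vartheta_1|\eta|}\le C^{|\eta|}$ with $C=e^{\vartheta_1}\max\{1,\|k_t\|_{\vartheta_1}\}$. What genuinely has to be proved is existence and uniqueness of the evolution in $\mathcal{K}$ together with the positivity $k_t\in\mathcal{K}^\star$, that is, property (a) of (\ref{9h}).

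For existence and uniqueness on the scale I would use the bounds (\ref{J11}): they show that $L^\Delta$ maps $\mathcal{K}_\vartheta$ boundedly into every $\mathcal{K}_{\vartheta'}$, $\vartheta'>\vartheta$, with norm controlled by $(\vartheta'-\vartheta)^{-1}$ and $(\vartheta'-\vartheta)^{-2}$. This is the setting of an Ovsyannikov-type (abstract Cauchy--Kovalevskaya) argument: writing $k_t=\sum_{n\ge0}(t^n/n!)(L^\Delta)^n k_{\mu_0}$ and splitting a fixed gap $\vartheta_1-\vartheta_0$ into $n$ equal sub-gaps, one estimates the $n$-th term via (\ref{J11}) and obtains convergence in $\mathcal{K}_{\vartheta_1}$ on a time interval whose length depends on $\vartheta_1-\vartheta_0$; the same telescoping estimate gives uniqueness among scale-valued solutions on each such interval. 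The global-in-time statement then has to be closed by an a priori bound controlling $\|k_t\|_{\vartheta_1}$ with a $t$-independent $\vartheta_1$; this is exactly where I expect to invoke the self-regulation encoded in the competition term, the negative particle-adding contribution $-\int_{\mathds{R}^d}(\sum_{y\in\eta}a^-(x-y))k(\eta\cup x)\,dx$ of (\ref{J8a}), presumably through Lemma \ref{J1lm}.

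The main obstacle is the positivity $k_t\in\mathcal{K}^\star$, on which the Ovsyannikov series gives no sign information. I would obtain it by approximating $L^\Delta$ by generators of genuine Markov dynamics. Concretely, regularize the birth input---for instance by letting births act only from configurations of bounded size, or by cutting the dispersal kernel $a^+$---so that the regularized operator $L^\Delta_\sigma$ is bounded on the single space $\mathcal{K}_{\vartheta_1}$ and is the $L^\Delta$ of an honest birth-and-death jump process on $\Gamma$. For such a model $k^\sigma_t:=e^{tL^\Delta_\sigma}k_{\mu_0}$ is the correlation function of an actual state for every $t$, hence $k^\sigma_t\in\mathcal{K}^\star_{\vartheta_1}$. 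I would then check that $\mathcal{K}^\star_\vartheta$ is closed in $\mathcal{K}_\vartheta$---each functional $k\mapsto\langle\!\langle G,k\rangle\!\rangle$ is continuous on $\mathcal{K}_\vartheta$ because $G\in B^\star_{\rm bs}(\Gamma_0)$ has bounded support, so that $\int_{\Gamma_0}|G|e^{\vartheta|\eta|}\lambda(d\eta)<\infty$---and that, thanks to the uniform a priori bound above, $k^\sigma_t\to k_t$ in the scale as the cutoff is removed, uniformly on compact time intervals. Closedness of the cone then forces $k_t\in\mathcal{K}^\star$, and the same approximation supplies the uniform bound that closes the global-existence step.

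With $k_t\in\mathcal{K}^\star\cap\mathcal{K}_{\vartheta_1}$ and $k_t(\emptyset)=1$, Proposition \ref{Gpn} applies and produces, for each $t>0$, a unique state $\mu_t\in\mathcal{P}_{\rm exp}(\Gamma)$ with $k_t=k_{\mu_t}$; membership in $\mathcal{P}_{\rm exp}(\Gamma)$ is guaranteed by the exponential bound (\ref{I4}) coming from $k_t\in\mathcal{K}_{\vartheta_1}$. Uniqueness of the scale solution then transfers to uniqueness of the family $\{\mu_t\}_{t>0}$. I expect the delicate points to be the precise form of the $(\vartheta'-\vartheta)^{-2}$ Ovsyannikov estimate and, above all, the control of the competition-regularized approximations uniformly in the cutoff, which is what should make the mere continuity and integrability of $a^\pm$ (Assumption \ref{Ass1}) sufficient.
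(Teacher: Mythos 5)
Your framework is sound as far as it goes, but it reconstructs precisely the part of the argument that the paper does \emph{not} prove, while omitting the part it does. Everything you sketch --- the Ovsyannikov-type iteration on the scale $\{\mathcal{K}_\vartheta\}$ using the $(\vartheta'-\vartheta)^{-1}$ and $(\vartheta'-\vartheta)^{-2}$ bounds of (\ref{J11}), positivity via cutoff approximations by genuine Markov dynamics, closedness of the cone, identification of $k_t$ with $k_{\mu_t}$ through Proposition \ref{Gpn} --- is imported wholesale from \cite[Theorem 3.3]{KK}, and the paper's proof of Theorem \ref{1tm} is accordingly a single sentence: that theorem applies once the domination condition (\ref{J13}) is verified, and the entire content of Section 3 is Lemma \ref{J1lm}, which establishes (\ref{J13}) for \emph{every} pair of kernels satisfying Assumption \ref{Ass1}. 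At exactly this point your proposal has a genuine gap: you write that the global continuation and the uniform-in-cutoff control come ``presumably through Lemma \ref{J1lm}'', but you give no argument for that lemma. This is not a technical footnote. \cite[Theorem 3.3]{KK} requires (\ref{J13}) as a hypothesis; in the long-dispersal case (ii) the inequality fails pointwise (for every $\theta>0$ there exist $x$ with $a^{-}(x)<\theta a^{+}(x)$), so it must be produced collectively over configurations, and without it the a priori bound closing your global-existence step --- and the uniform control of your regularized approximations --- is exactly what was unavailable in the prior literature. The novelty of the theorem is the removal of (\ref{J13}) as an assumption, which your proposal leaves unaddressed.

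The missing argument is geometric and combinatorial, not semigroup-theoretic. In outline: by Riemann integrability of $a^{+}$ one chooses cubic cells $E_l$ of side $h$ with $h^d\sum_l \sup_{E_l}a^{+}\leq \langle a^{+}\rangle+\varepsilon$, and by continuity one picks $r>0$ with $a^{-}_r:=\inf_{K_{2r}(0)}a^{-}>0$, cf.\ (\ref{J14a}). One then proves (\ref{J16}) by induction on $|\eta|$ through the increment $U_\theta(x,\eta\setminus x)$ of (\ref{P2}), choosing $x\in\eta$ that maximizes $s=|\eta\cap K_{2r}(x)|$. For this $x$, covering each $\eta_l=\eta\cap E_l(x)$ by $2r$-balls centered at a maximal $r$-admissible subset and invoking the packing-constant bound (\ref{J15}) yields $\sum_{y\in\eta\setminus x}a^{+}(x-y)\leq s\,\delta(a^{+})$ with $\delta(a^{+})$ as in (\ref{J17}), while maximality of $s$ gives $\sum_{y\in\eta\setminus x}a^{-}(x-y)\geq (s-1)a^{-}_r$; taking $\theta\leq\min\{\omega/2\delta(a^{+}),\,a^{-}_r/\delta(a^{+})\}$ as in (\ref{J18}) makes the increment nonnegative, and the induction closes. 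None of this can be substituted by the analytic machinery you describe, since that machinery consumes (\ref{J13}) as input. For completeness: your subsidiary observations are correct --- $(L^\Delta k)(\emptyset)=0$ so $k_t(\emptyset)\equiv 1$ is conserved, and the functionals $k\mapsto\langle\!\langle G,k\rangle\!\rangle$ with $G\in B^\star_{\rm bs}(\Gamma_0)$ are indeed continuous on each $\mathcal{K}_\vartheta$, so the cone argument is viable --- but these were never the crux.
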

\begin{theorem}[Migration model]
  \label{2tm}
For each $\mu_0 \in \mathcal{P}_{\rm exp} (\Gamma)$, the problem in
(\ref{J7}) with $L^\Delta:\mathcal{K}\to \mathcal{K}$ as in
(\ref{J8a}), (\ref{J10}) and (\ref{J12}) has a unique solution which
lies in  $\mathcal{K}^\star$ and is such that $k_t(\emptyset)=1$ for
all $t>0$. Therefore, for each $t>0$, there exists a unique state
$\mu_t\in \mathcal{P}_{\rm exp}(\Gamma)$ such that $k_t =
k_{\mu_t}$. Moreover, these states $\mu_t$ have the property: for
every $n\in \mathds{N}$ and $\Lambda \in \mathcal{B}_{\rm
b}(\mathds{R}^d)$, the following holds, cf. (\ref{J3b}),
\[
\forall t>0 \qquad  \mu_t (N_\Lambda^n) \leq C_\Lambda^{(n)}.
\]
\end{theorem}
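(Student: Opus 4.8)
The plan is to combine an Ovsyannikov-type construction of the solution of (\ref{J7}) in the ascending scale $\{\mathcal{K}_\vartheta\}_{\vartheta\in\mathds{R}}$ with an a~priori control of the correlation functions that is read off directly from the sign structure of the migration generator (\ref{J10})--(\ref{J8a}). First I would treat local existence and uniqueness. Fixing $\vartheta_0$ with $k_{\mu_0}\in\mathcal{K}_{\vartheta_0}$ and using the operator bound (\ref{J12}), whose dominant terms carry the factors $(\vartheta'-\vartheta)^{-1}$ and $(\vartheta'-\vartheta)^{-2}$, I would build the solution as the sum $k_t=\sum_{m\ge0}\frac{t^m}{m!}(L^\Delta)^m k_{\mu_0}$, distributing each power over a partition of a fixed interval $[\vartheta_0,\vartheta_1]$ and summing the resulting estimates. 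This produces a unique continuously differentiable map $t\mapsto k_t\in\mathcal{K}_{\vartheta_1}$ on an interval $[0,T)$, with $T$ governed by $\vartheta_1-\vartheta_0$ and the constants in (\ref{J12}); uniqueness on $[0,T)$ follows because the difference of two solutions solves the homogeneous equation and is forced to vanish by the same estimates.

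The core of the argument is a uniform-in-time bound that upgrades this local solution to a global one lying in a single fixed space. Here I would pass to the hierarchy for $k_t^{(n)}$ obtained by evaluating (\ref{J10}) on $\eta=\{x_1,\dots,x_n\}$:
\begin{equation*}
\begin{aligned}
\frac{d}{dt}k_t^{(n)}(x_1,\dots,x_n)= {} & -\Big(mn+\sum_{i\ne j}a^{-}(x_i-x_j)\Big)k_t^{(n)} -\sum_{i=1}^n\int_{\mathds{R}^d}a^{-}(x_i-y)\,k_t^{(n+1)}(x_1,\dots,x_n,y)\,dy \\
& {} +\sum_{i=1}^n b(x_i)\,k_t^{(n-1)}.
\end{aligned}
\end{equation*}
The two terms carrying $a^{-}$ are the self-regulating ones: since the solution is nonnegative (it lies in the cone $\mathcal{K}^+_\vartheta$), both are $\le 0$ and may be discarded in an upper estimate, leaving the recursive differential inequality $\frac{d}{dt}k_t^{(n)}\le -mn\,k_t^{(n)}+n\|b\|\,\|k_t^{(n-1)}\|_{L^\infty}$. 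With the base case $k_t^{(0)}\equiv k_t(\emptyset)=1$, integrating and inducting on $n$ yields a bound $\|k_t^{(n)}\|_{L^\infty}\le C e^{\vartheta n}$ that is independent of $t$, i.e. $\sup_{t\ge0}\|k_t\|_\vartheta<\infty$ for a suitable fixed $\vartheta$. Standard continuation then extends the local solution to all $t\ge0$ inside this fixed ball, and the properties $k_t\in\mathcal{K}^\star$ and $k_t(\emptyset)=1$ identify it, through Proposition \ref{Gpn}, with a unique state $\mu_t\in\mathcal{P}_{\rm exp}(\Gamma)$ satisfying $k_t=k_{\mu_t}$.

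For the moment bounds I would use that the integral of $k_t^{(n)}$ over $\Lambda^n$ is exactly the factorial moment, so that $\mu_t(N_\Lambda^{[n]})=\int_{\Lambda^n}k_t^{(n)}(x_1,\dots,x_n)\,dx_1\cdots dx_n\le C e^{\vartheta n}\,{\rm V}(\Lambda)^n$ uniformly in $t$. Expressing the ordinary power $N_\Lambda^n$ through the falling factorials $N_\Lambda^{[j]}$, $j\le n$, by the Stirling numbers then gives $\mu_t(N_\Lambda^n)\le C_\Lambda^{(n)}$ with a $t$-independent constant, which is the asserted global self-regulation.

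The step I expect to be the main obstacle is the uniform-in-time a~priori bound of the second paragraph: it requires nonnegativity of the correlation functions to be available \emph{simultaneously} with the differential inequality, so the argument must be organized as a bootstrap between positivity and boundedness (or carried out on the level of the state evolution, whose existence and positivity are supplied separately). For $m>0$ the intrinsic mortality alone closes the recursion and delivers the uniform constant; when $m=0$ the damping must instead be extracted from the competition terms that were discarded above, so the sign and integrability of $a^{-}$ have to be exploited more delicately in order to retain a $t$-independent bound.
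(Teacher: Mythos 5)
Your proposal founders exactly where you say it might, and the failure is demonstrable rather than merely a worry. The uniform-in-time bound in your second paragraph is obtained by discarding both terms carrying $a^{-}$ and keeping only $\frac{d}{dt}k_t^{(n)}\le -mn\,k_t^{(n)}+n\|b\|\,\|k_t^{(n-1)}\|_{L^\infty}$. But this inequality is satisfied \emph{with equality} by the Surgailis model ($a^{-}\equiv 0$), for which, as the paper itself recalls in subsection 2.3, the case $m=0$ gives $\varrho_t(x)=\varrho_0(x)+b(x)t$ -- linear growth. So your differential inequality is consistent with unbounded moments and cannot possibly yield a $t$-independent constant when $m=0$; since the theorem is asserted for all $m\ge 0$, and the whole point of the ``Moreover'' clause is that \emph{competition} (not mortality) produces the global regulation, the one estimate your plan hinges on is provably too weak. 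The missing idea is to keep the competition term and extract superlinear negative feedback from it, and the paper does this at the level of the state evolution rather than the hierarchy: as announced in the article overview, it first proves the moment bound \emph{assuming} the evolution $\mu_0\mapsto\mu_t$ exists (this is the role of the lemma on global self-regulation quoted there), and only then proves existence (Section 4). The mechanism is that for $\Lambda$ of small diameter one has $a^{-}(x-y)\ge a^{-}_r>0$ for all $x,y\in\Lambda$, cf. (\ref{J14a}), whence applying the generator to $N_\Lambda^n$ retains a term of order $-a^{-}_r\,\mu_t\bigl(N_\Lambda^{n+1}\bigr)$, which Jensen's inequality closes into a Riccati-type inequality, e.g. $\frac{d}{dt}\mu_t(N_\Lambda)\le \int_\Lambda b\,dx + a^{-}_r\,\mu_t(N_\Lambda)-a^{-}_r\,\mu_t(N_\Lambda)^2$, whose solutions are bounded uniformly in $t$ for every $m\ge 0$; general $\Lambda$ is then handled by covering with small cells. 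None of this can be recovered after the $a^{-}$ terms have been thrown away.

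There is a second substantive gap: nonnegativity. You use $k_t\in\mathcal{K}^{+}_\vartheta$ to discard the competition terms and $k_t\in\mathcal{K}^\star$ to invoke Proposition \ref{Gpn}, but the Ovsyannikov series $\sum_m \frac{t^m}{m!}(L^\Delta)^m k_{\mu_0}$ carries no sign information whatsoever, and membership in the set (\ref{19a}) is precisely the hard part of such theorems -- it is normally established through approximating (auxiliary or local) evolutions to which Proposition \ref{Gpn} can be applied term by term, which is what the paper's separate existence section is for. You flag this as a ``bootstrap between positivity and boundedness'' but never set the bootstrap up, so even for $m>0$ your argument is conditional on an unproven hypothesis. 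What does survive of your proposal: the local existence/uniqueness construction in the scale $\{\mathcal{K}_\vartheta\}$ via (\ref{J12}) is the standard and correct starting point, the hierarchy you write down is the correct evaluation of (\ref{J10})--(\ref{J8a}), and the reduction of $\mu_t(N_\Lambda^n)$ to factorial moments $\int_{\Lambda^n}k_t^{(n)}$ via Stirling numbers is fine once uniform bounds on the $k_t^{(n)}$ (or directly on the moments, as in the paper) are actually in hand.
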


\subsection{Comments and comparison}

For $a^{-} \equiv 0$, both models considered in this work get
exactly soluble. The Bolker-Pacala version with $E^{-}(x,\gamma) =
m$  ($m\geq 0$ being the intrinsic mortality rate) is known as the
continuum contact model -- see \cite{Dima}, and also the discussion
in \cite[Introduction]{KK} and the literature cited therein. In this
model, the evolution $\mu_0 \mapsto \mu_t$ does not preserve the
class $\mathcal{P}_{\rm exp}(\Gamma)$, cf. \cite[eq. (3.5), page
303]{Dima}. For $m\geq \langle a^{+} \rangle$, the correlation
functions remain bounded in time. That is, the global
regulation is achieved at the expense of large intrinsic
mortality. Moreover, the system dies out if $m> \langle a^{+}
\rangle$.

The migration version of (\ref{L}) with $E^{-}(x,\gamma) = m$ is
known as the Surgailis model, see \cite{Sur} and the discussion in
\cite{DimaR}. See also its solution in (\ref{A53}) below obtained
for $E^{-} \equiv 0$. For this model with $m>0$, the large-time
limits of the correlation functions are Poissonian with the density
$\varrho(x) = b(x)/m$. If the initial state is Poissonian with
density $\varrho_0 (x)$,  and if $m=0$, the state $\mu_t$ is also
Poissonian with the density $\varrho_t (x) = \varrho_0 (x) + b(x)
t$, cf. (\ref{A53}) below. That is, in this model the global regulation is possible
only if $m(x) \geq m>0$ for all $x\in \mathds{R}^d$.

Now we give more specific comments to each of the models.

\subsubsection{The Bolker-Pacala model}

As follows from our Theorem \ref{1tm},  adding competition to the
continuum contact model mentioned above yields the local
self-regulation -- no matter how long the dispersal is  and how
local is the competition. Also their magnitudes do not matter for
the very fact of the self-regulation. A sufficient condition under
which the property stated in Theorem \ref{1tm} (as well as in
\cite[Theorem 3.3]{KK}) holds, cf. \cite[eq. (3.5)]{KK}, in the
present notations is
\begin{equation}
  \label{J13}
\omega |\eta| + \sum_{x\in \eta}\sum_{y\in \eta\setminus x}
a^{-}(x-y) \geq \theta \sum_{x\in \eta}\sum_{y\in \eta\setminus x}
a^{+}(x-y),
\end{equation}
holding for some $\omega \geq 0$ and $\theta>0$, and
$\lambda$-almost all $\eta \in\Gamma_0$. Recall that $|\eta|$ stands
for the cardinality of $\eta\in \Gamma_0$. In the short dispersal
case (studied in \cite{FKKK}) the condition in (\ref{J13}) readily
holds with $\omega =0$. Then the most intriguing question here is
whether it can hold in the long dispersal case. In \cite[Proposition
3.7]{KK}, it was shown that measurable $a^{+}$ and $a^{-}$ satisfy
(\ref{J13}) with some $\omega$ and $\theta$ if $a^{-}(x)$ is
separated away from zero for $|x|<r$ with some $r>0$, and
$a^{+}(x)\equiv 0$ for $|x| \geq R$ with some $R>0$ with the
possibility $R>r$. Another choice of $a^{+}$ and $a^{-}$ satisfying
(\ref{J13}) can be, see \cite[Proposition 3.8]{KK},
\[
a^{\pm} (x) = \frac{c_{\pm}}{( 2 \pi\sigma_{\pm}^2)^d/2} \exp\left(
- \frac{1}{2 \sigma_{\pm}^2} |x|^2\right),
\]
with all possible values of the parameters $c_{\pm} >0$ and
$\sigma_{\pm} >0$. An important example of $a^{\pm}$ which both
Propositions 3.7 and 3.8 of \cite{KK} do not cover is
$a^{-}$ having finite range and $a^{+}$ being Gaussian as above. The
novelty of our present (rather unexpected) result is that
(\ref{J13}) is satisfied \emph{for any} $a^{+}$ and $a^{-}$ as in
Assumption \ref{Ass1}, and hence the local self-regulation is
achieved by applying any kind of competition. Does not matter how
weak and short-ranged. Finally, we remark that for this model the
conditions of boudedness and continuity of $a^{-}$ can be relaxed.
As follows from the proof of Lemma \ref{J1lm} below, like in
\cite{KK} it is enough to assume that $a^{-}$ is measurable and
separated away from zero in some ball. As for $a^{+}$, it is enough to
have a continuous $\tilde{a}^{+}\in L^\infty (\mathds{R}^d)\cap
L^1(\mathds{R}^d)$ such that $\tilde{a}^{+}(x) \geq a^{+}(x)$ for
almost all $x$.

\section{The Proof Theorems \ref{1tm} }

In \cite[Theorem 3.3]{KK}, it was proved that the evolution in
question exists whenever the kernels $a^{\pm}$ are bounded and
integrable, and satisfy (\ref{J13}) with some $\omega$ and $\theta$.
Thus, the proof of Theorem \ref{1tm} relies upon proving the
following statement.
\begin{lemma}
  \label{J1lm}
Let $a^{\pm}$ satisfy Assumption \ref{Ass1}. Then one finds $\omega
\geq 0$ and $\theta >0$ such that (\ref{J13}) holds for all $\eta\in
\Gamma_0$.
\end{lemma}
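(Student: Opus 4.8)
The plan is to read (\ref{J13}) as the assertion that the pairwise sum of $a^{+}$ over $\eta$ is dominated by the corresponding sum of $a^{-}$ up to a term linear in $|\eta|$, and to prove this by decomposing $\mathds{R}^d$ into small cells on which $a^{-}$ stays bounded below. First I would use the regularity of $a^{-}$: being continuous and not identically zero, $a^{-}$ is bounded below, say $a^{-}(x)\ge\delta>0$, on some closed ball $B(0,r)$ about the origin (this is the sole place where a lower bound on $a^{-}$ is needed, which is exactly why, as noted after Theorem \ref{1tm}, continuity of $a^{-}$ may be weakened to measurability together with positivity on a ball). Fix a partition of $\mathds{R}^d$ into congruent cubes $\{Q_k\}$ of diameter at most $r$ and, for $\eta\in\Gamma_0$, set $n_k=|\eta\cap Q_k|$, so $|\eta|=\sum_k n_k$. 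Any two distinct points in the same cube differ by a vector in $B(0,r)$, so each ordered same-cube pair contributes at least $\delta$ to the left-hand side of (\ref{J13}); this yields
\begin{equation*}
\sum_k n_k(n_k-1)\le\frac{1}{\delta}\sum_{x\in\eta}\sum_{y\in\eta\setminus x}a^{-}(x-y),\qquad
\sum_k n_k^2\le\frac{1}{\delta}\sum_{x\in\eta}\sum_{y\in\eta\setminus x}a^{-}(x-y)+|\eta|.
\end{equation*}

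Next I would split the $a^{+}$-sum into same-cube and cross-cube contributions. For same-cube pairs the bound $a^{+}\le\|a^{+}\|$ combined with the first inequality above gives a contribution at most $(\|a^{+}\|/\delta)\sum_{x}\sum_{y}a^{-}(x-y)$. For cross-cube pairs, put $\bar a_{kl}=\sup\{a^{+}(u-v):u\in Q_k,\ v\in Q_l\}$; then the cross term is at most $\sum_{k\ne l}n_kn_l\,\bar a_{kl}$, and the elementary inequality $n_kn_l\le\tfrac12(n_k^2+n_l^2)$ bounds it by $\mathcal{A}\sum_k n_k^2$, where $\mathcal{A}$ is a finite constant majorizing $\sup_k\sum_l\bar a_{kl}$. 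Feeding in the second inequality above, the whole $a^{+}$-sum is controlled by $\sum_{x}\sum_{y}a^{-}(x-y)$ and $|\eta|$, and the choice $\theta=\delta/(\|a^{+}\|+\mathcal{A})$, $\omega=\theta\mathcal{A}$ then delivers (\ref{J13}) for every $\eta\in\Gamma_0$.

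The heart of the matter, and the step I expect to be the main obstacle, is the uniform cell bound $\mathcal{A}<\infty$, i.e.\ that the cell suprema of $a^{+}$ are summable uniformly in the reference cube. This is precisely where continuity and integrability of $a^{+}$ must be used jointly: writing $\bar a_{kl}$ as the supremum of $a^{+}$ over the cube $Q_k-Q_l$, whose translates have bounded overlap as $l$ varies, one seeks to estimate $\sum_l\bar a_{kl}$ by a Riemann-type sum for $\int_{\mathds{R}^d}a^{+}(x)\,dx=\langle a^{+}\rangle$. The passage from the suprema $\bar a_{kl}$ to the integral is the delicate point, since a merely continuous $L^1\cap L^\infty$ function need not have summable cell suprema; one controls the oscillation across each cube using the (uniform) continuity and decay of $a^{+}$, which is the natural role of Assumption \ref{Ass1} here, and it is at this juncture that the remark after Theorem \ref{1tm} allows replacing $a^{+}$ by any continuous $\tilde a^{+}\in L^1\cap L^\infty$ with $\tilde a^{+}\ge a^{+}$ for which this maximal-function control is available.

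Once $\mathcal{A}$ is finite and independent of $k$, the argument closes uniformly in $\eta$: the same-cube part is absorbed into $S^{-}$, the cross-cube part is absorbed into $\mathcal{A}\sum_k n_k^2\le(\mathcal{A}/\delta)S^{-}+\mathcal{A}\,|\eta|$, and summing the two pieces reproduces exactly the form $\theta\,S^{+}\le S^{-}+\omega\,|\eta|$ of (\ref{J13}) with the explicit constants above. I would finally remark that the cell size $r$ is dictated solely by $a^{-}$ and $\delta$, so $\theta$ and $\omega$ depend only on $\|a^{+}\|$, $\langle a^{+}\rangle$, $\delta$, $r$ and the dimension $d$, as required.
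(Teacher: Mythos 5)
Your global quadratic-form argument is sound in outline and is a genuinely different route from the paper's. The paper proceeds by induction on $|\eta|$: it removes a point $x$ of maximal local density $s=\max_{y\in\eta}|\eta\cap K_{2r}(y)|$, bounds $\sum_{y\in\eta\setminus x}a^{+}(x-y)\le s\,\delta(a^{+})$ by a sphere-packing estimate (maximal $r$-admissible subsets of each cell, with Groemer's packing constant $\Delta(d)$ entering through (\ref{J17})), and balances this against $\sum_{y\in\eta\setminus x}a^{-}(x-y)\ge(s-1)a^{-}_r$ under the choice (\ref{J18}). Your same-cube/cross-cube split with $n_kn_l\le\tfrac12(n_k^2+n_l^2)$ reaches (\ref{J13}) in one stroke, with explicit $\theta=\delta/(\|a^{+}\|+\mathcal{A})$ and $\omega=\theta\mathcal{A}$, avoiding both the induction and the packing constant; the algebra does close correctly (one harmless caveat: $a^{+}$ is not assumed even, so $\bar a_{kl}$ and $\bar a_{lk}$ may differ and $\mathcal{A}$ should majorize the symmetrized row sums $\sup_k\sum_l\tfrac12(\bar a_{kl}+\bar a_{lk})$, which changes nothing).

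The genuine gap is the finiteness of $\mathcal{A}=\sup_k\sum_l\bar a_{kl}$, which you assert but do not prove, and the justification you offer --- ``(uniform) continuity and decay of $a^{+}$'' --- is not available: Assumption \ref{Ass1} grants neither uniform continuity nor decay at infinity (a continuous $L^1\cap L^\infty$ function can carry non-decaying spikes of shrinking width), and you yourself correctly observe that summable cell suprema can fail for such functions, so as written the cross-cube bound, and with it the whole proof, is unsupported. This is precisely the point the paper settles by its Riemann-sum estimate (\ref{J14}), $h^d\sum_l\sup_{E_l}a^{+}\le\langle a^{+}\rangle+\varepsilon$, which it obtains by reading Assumption \ref{Ass1} as guaranteeing (improper) Riemann integrability of $a^{+}$; your worry about this step is in fact legitimate even against the paper, but within the paper's framework (\ref{J14}) is the accepted input. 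Granting (\ref{J14}), your $\mathcal{A}$ is finite by exactly the bounded-overlap observation you sketch: each difference set $Q_k-Q_l$ is a cube of side $2h$, as $l$ varies these cover $\mathds{R}^d$ with multiplicity at most $2^d$ uniformly in $k$, and the supremum over a side-$2h$ cube is the maximum of the suprema over its $2^d$ side-$h$ subcubes, whence $\mathcal{A}\le C_d\,h^{-d}\left(\langle a^{+}\rangle+\varepsilon\right)$ with $C_d$ depending only on $d$. So your argument becomes a complete (and arguably cleaner) alternative once you replace the appeal to uniform continuity by an explicit import of (\ref{J14}); without that substitution the proof fails at its acknowledged crux.
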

\begin{proof}
Since $a^{+}$ is Riemann integrable, for an arbitrary $\varepsilon>0$, one
can divide $\mathds{R}^d$ into equal cubic cells $E_l$, $l\in \mathds{N}$,
of small enough side $h>0$ such that the following holds, see
(\ref{J6a}),
\begin{equation}
  \label{J14}
 h^d \sum_{l=1}^\infty a^{+}_l  \leq \langle a^{+} \rangle +
  \varepsilon, \qquad a^{+}_l := \sup_{x\in E_l} a^{+}(x).
\end{equation}
Given $r>0$ and $x\in \mathds{R}^d$, we set $K_r (x) = \{y\in
\mathds{R}^d: |x-y|<r\}$ and
\begin{equation}
\label{J14a} a^{-}_r = \inf_{x\in
  K_{2r}(0)} a^{-} (x).
\end{equation}
Then we fix $\varepsilon$ and pick $r>0$ such that $a^{-}_r>0$. In
the following, $r$, $h$ and $\varepsilon$ are fixed.

The proof of the lemma will be done by the induction in the number
of points in $\eta$. To do this we rewrite (\ref{J13}) in the form
\begin{equation}
  \label{J16}
U_\theta (\eta) := \omega |\eta| + \sum_{x\in \eta}\sum_{y\in
\eta\setminus x} a^{-}(x-y) - \theta \sum_{x\in \eta}\sum_{y\in
\eta\setminus x} a^{+}(x-y) \geq 0.
\end{equation}
For some $x\in \eta$, consider
\begin{eqnarray}
  \label{P2}
U_\theta (x, \eta\setminus x) & := & U_\theta (\eta) -
U_\theta (\eta \setminus x) \\[.2cm] \nonumber & = &
\omega + 2 \left(\sum_{y \in \eta \setminus x} a^{-} (x-y) -
\theta\sum_{y \in \eta \setminus x} a^{+} (x-y) \right).
\end{eqnarray}
Let $c_d$ be the volume of the unit ball $K_1$ and $\Delta (d)$ be the
packing constant for the rigid balls in $\mathds{R}^d$, cf. \cite{Groemer}.
Set
\begin{gather}
  \label{J17}
  \delta (a^{+}) = \max\left\{ \|a^{+}\|; (\langle a^{+} \rangle +
  \varepsilon)g_d(h,r) \right\}, \\[.2cm] \nonumber g_d(h,r)= \frac{\Delta(d)}{c_d}\left(\frac{h+2r}{hr}
  \right)^d,
\end{gather}
and assume that $\omega$ and $\theta$ satisfy the following, cf.
(\ref{J14a}),
\begin{equation}
  \label{J18}
 \theta \leq \min\left\{\frac{\omega}{2 \delta(a^{+})}; \frac{a^{-}_r}{\delta(a^{+})}
 \right\}.
\end{equation}
Let us show that:
\begin{itemize}
  \item[(a)] for each $\eta=\{x,y\}$, (\ref{J18}) yields (\ref{J16});
\item[(b)] for each $\eta$, one finds $x\in \eta$ such that
$U_\theta  (x, \eta \setminus x) \geq 0$ whenever (\ref{J18}) holds.
\end{itemize}
If both (a) and (b) hold, then (\ref{J16}) will follow from
(\ref{P2}) by the induction in $|\eta|$. To prove (a) we write
\begin{gather*}
U_\theta (\{x,y\}) = 2\omega + 2 a^{-} (x-y) - 2 \theta a^{+} (x-y) \\[.2cm]
\geq \left(\omega - 2 \theta \| a^{+}\| \right) + 2
a^{-} (x-y) \geq 0,
\end{gather*}
with the latter estimate following by (\ref{J18}) and (\ref{J17}). To prove
(b), for $y\in \eta$, we set
\begin{equation}
  \label{J19}
  s= \max_{y\in \eta}
  \left\vert\eta \cap K_{2r}(y)\right\vert.
\end{equation}
Let also $x\in \eta$ be such that $\left\vert\eta \cap K_{2r}(x)\right\vert=s$.
For this $x$, by
$E_l(x)$, $l\in \mathds{N}$, we denote the corresponding translates
of $E_l$ which appear in (\ref{J14}). Set $\eta_l = \eta \cap E_l
(x)$ and let $l_*\in \mathds{N}$ be such that $\eta \subset
\cup_{l\leq l_*} E_l (x)$, which is possible since $\eta$ is finite.
For a given $l$, a subset $\xi_l \subset \eta_l$ is called
$r$-admissible if for each distinct $y,z \in \xi_l$, one has that
$K_r(y) \cap K_r(z) = \emptyset$. Such a subset $\xi_l$ is called
maximal $r$-admissible if $|\xi_l| \geq |\xi'|$ for any other
$r$-admissible $\xi'_l$. It is clear that
\begin{equation}
  \label{J20}
\eta_l \subset \bigcup_{z\in \xi_l} K_{2r}(z).
\end{equation}
Otherwise, one finds $y\in \eta_l$ such that $|y-z|\geq 2r$, for
each $z\in \xi_l$, which yields that $\xi_l$ is not maximal. Since
all the balls $K_r (z)$, $z\in \xi_l$, are contained in the $h$-extended cell
$$E_l^h(x):=\{y\in \mathds{R}^d: \inf_{ z\in E_l(x)} |y-z|\leq h \},$$
their maximum number -- and hence $|\xi_l|$ -- can be estimated as
follows
\begin{equation}
  \label{J15}
|\xi_l| \leq \Delta (d) {\rm V}(E_l^h (x))/c_d r^d = h^d
\frac{\Delta (d)}{c_d}\left( \frac{h+2r}{h r} \right)^d = h^d
g_d(h,r),
\end{equation}
where $c_d$ and $\Delta(d)$ are as in (\ref{J17}).
Then by
(\ref{J19}) and (\ref{J20}) we get
\begin{gather*}
 \sum_{y\in\eta\setminus x} a^{+} (x-y) \leq
 \sum_{l=1}^{l_*} \sum_{z\in \xi_l} \sum_{y\in K_{2r}(z) \cap
 \eta_l}a_l^{+}.
\end{gather*}
The cardinality of $K_{2r}(z) \cap
 \eta_l$ does not exceed $s$, see (\ref{J19}), whereas the
 cardinality of $\xi_l$ satisfies (\ref{J15}). Then
\begin{gather}
\label{J21}
 \sum_{y\in\eta\setminus x} a^{+} (x-y) \leq
s g_d(h,r) \sum_{l=1}^\infty a^{+}_l h^d \leq s g_d(h,r) (\langle
a^{+} \rangle + \varepsilon) \leq s \delta(a^{+}).
\end{gather}
On the other hand, by (\ref{J14a}) and (\ref{J19}) we get
\[
\sum_{y\in \eta\setminus x} a^{-}(x-y) \geq \sum_{y\in
(\eta\setminus x)\cap K_{2r}(x)} a^{-}(x-y) \geq (s-1) a^{-}_r.
\]
We use this estimate and (\ref{J21}) in (\ref{P2}) and obtain
\[
U_\theta (x, \eta \setminus x) \geq 2 \delta (a^{+}) \left[
 \left(\frac{\omega}{2 \delta (a^{+})} - \theta\right) + (s-1)
\left(\frac{a_r^{-}}{\delta (a^{+})} - \theta \right) \right] \geq
0,
\]
see (\ref{J18}). Thus, claim (b) also holds, which completes the
proof.
\end{proof}
Now the proof of Theorem \ref{1tm} follows by \cite[Theorem 3.3]{KK} and Lemma \ref{J1lm} just proved.

\section*{Acknowledgment}
The present research was supported by the European Commission under
the project STREVCOMS PIRSES-2013-612669 and by the SFB 701
``Spektrale Strukturen and Topologische Methoden in der Mathematik".

\end{document}